\documentclass[letterpaper, 10 pt, conference]{ieeeconf}  

\IEEEoverridecommandlockouts                              
\usepackage{amsmath}
\usepackage{amsmath,amssymb,amsfonts} 
\usepackage{graphicx}
\usepackage{algorithm}
\usepackage{algpseudocode}
\usepackage{hyperref}
\hypersetup{hidelinks=true}
\usepackage{textcomp}
\usepackage{epstopdf}
\usepackage{booktabs} 
\usepackage{hyperref}
\usepackage{amsfonts}       
\usepackage{multicol}
\usepackage{multirow} 
\usepackage{subcaption}
\usepackage{xcolor}

\usepackage{amsthm}

\newcommand{\R}{\mathbb{R}}

\newcommand{\bmat}[1]{\begin{bmatrix}#1\end{bmatrix}}

\newcommand{\mcl}[1]{\mathcal{#1}}
\newcommand{\mbf}[1]{\mathbf{#1}}

\newcommand{\N}{\mathbb{N}}

\theoremstyle{plain}

\newtheorem{thm}{Theorem}
\newtheorem{defn}[thm]{Definition}

\title{\LARGE \bf
Picard Iteration for Parameter Estimation in Nonlinear Dynamic Models of Aircraft and Spacecraft}

\author{Aleksandr Talitckii$^{1}$ and Matthew M. Peet$^{1}$
\thanks{This works was supported by the National Science Foundation under grants NSF CCF-2323532}
\thanks{$^{1}$Aleksandr Talitckii and Matthew M. Peet are with the School for the Engineering of Matter, Transport and Energy, Arizona State University, Tempe, AZ, 85298, USA. {\tt\small atalitck@asu.edu} and {\tt\small mpeet@asu.edu }}
}

\begin{document}

\maketitle
\thispagestyle{empty}
\pagestyle{empty}

\begin{abstract}
The attitude dynamics of aircraft and spacecraft exhibit significantly nonlinear behaviour. In spacecraft, torque is generated through reaction wheels and control moment gyros. In aircraft, torque is generated using lift on control surfaces. In both cases, complex geometries, unique configurations, and internal/environmental changes imply that models must be identified, verified, and updated using in-flight experimental data. However, this data is often noisy, sparsely sampled, and partial in that modeled states may not be directly measurable. In this paper, we propose a method for estimating key parameters in realistic Ordinary Differential Equation (ODE) models of both spacecraft and aircraft dynamics. This method avoids the need to directly measure state derivatives by coupling sampled outputs using the Picard mapping -- an integral constraint on the solution of the parameterized ODE. This constraint is then enforced, and optimal parameter estimates are found using a gradient contraction algorithm. This algorithm is applied to well-studied models of spacecraft and aircraft motion. First, the algorithm is used to estimate the inertia tensor in a 4 control-moment gyro (CMG) model of spacecraft motion. Second, we estimate the 28 higher-order control surface coefficients in a model of the F/A-18 aircraft.      
%
\end{abstract}

\section{Introduction}  

The fundamental laws of rigid body motion allow us to construct accurate models for the attitude dynamics of aircraft and spacecraft using nonlinear Ordinary Differential Equations (ODEs). However, due to complex geometries, unique configurations, and dependence on environmental factors, certain model parameters may need to be determined using experimental data. Furthermore, this data may be sparsely sampled at irregular intervals, noisy, and include only partial measurements of internal states.


The problem of parameter estimation is typically posed as grey-box system identification. While many convex methods exist for grey-box identification of linear systems in both the time and frequency domain (e.g.~\cite{ljung2010perspectives}), for nonlinear systems with sparse sampling, noise, and partial measurements, the problem becomes more challenging. Specifically, this parameter estimation problem may be formulated as a minimization of some loss function as applied to an output of the solution map, $\phi(t,x_0,\theta)$ -- which defines the dependence of the ODE solution on the initial conditions $x_0$ and parameters $\theta$. 


Unfortunately, for nonlinear ODEs, an analytic expression for the solution map is rarely available. The class of ``shooting methods'' avoids this problem by applying numerical simulation for given initial conditions and parameter values. These numerical solutions can then be used to approximate gradients of the loss function iteratively in gradient descent~\cite{byrd1988approximate, more2006levenberg}. However, such approaches require a large number of simulations at each iteration with no guarantee that the resulting gradients are accurate. Furthermore, when time horizons are long or dynamics are stiff~\cite{zhao2015accurate} (typical for attitude dynamics), numerical simulations are highly sensitive to initial condition and parameters -- implying that gradient approximations will be unreliable.





In this paper, we avoid numerical problems associated with shooting methods in parameter estimation by formulating the associated optimization problem directly in terms of an infinite-dimensional variable, $\mbf u(t)$, which represents the solution associated with a given choice of parameters, $\theta$, and initial conditions, $x_0$, while enforcing the constraint\vspace{-2mm}
\begin{equation}
\mbf u(t) = x_0 + \int\nolimits_0^t f(s, \mbf u(s), \theta)ds,\label{eqn:constraint}\vspace{-2mm}
\end{equation} 
This approach obviates the need for an analytic solution map, while avoiding numerical problems associated with shooting methods. This constraint can be represented as $\mbf u=\mcl P \mbf u$ where $\mcl P$ is the Picard mapping, where we recall that on sufficiently short time intervals, $\mcl P$ is a contraction and for any $\mbf u_0$, $\mbf u=\lim_{k\rightarrow \infty}\mcl P^k \mbf u_0$ satisfies Eqn.~\eqref{eqn:constraint} -- i.e. $\mbf u=\mcl P \mbf u$. Furthermore, for any $\mbf u_k$, the gradients $\nabla_\theta \mcl P \mbf u_k$ and $\nabla_{x_0} \mcl P \mbf u_k$ can be computed analytically.




Roughly speaking, then, to solve the parameter estimation problem, we use a gradient contractive algorithm, alternating between a gradient step and a Picard mapping, so that $\theta_{k+1}=\theta_k+\alpha \nabla_\theta L(\mcl P_{\theta_k} \mbf u_k)$ and $\mbf u_{k+1}=\mcl P_{\theta_{k+1}}\mbf u_k$ where $L$ is an objective function and $\alpha$ is a gradient descent step-size. Convergence then implies that the corresponding solution $\mbf u^*$ associated with $\theta^*$ will satisfy the dynamics of the model with parameters $\theta^*$. Of course, this requires an extension of the Picard mapping to indefinite time intervals and certain other technical elements which we omit for brevity. For a complete exposition of this algorithmic approach along with convergence properties in a more general context, we refer to~\cite{talitckii2024picard}. An overview of the algorithm is provided in Sec.~\ref{sec:algorithm}.


In this paper, we examine how this type of gradient contractive parameter estimation algorithm can be applied to the problem of identifying critical parameters in the attitude dynamics of stiff ODE models of aircraft and spacecraft. 

In the case of spacecraft dynamics, we estimate the inertia tensor in the attitude dynamics of spacecraft with $4$ variable-speed single gimbal control moment gyroscopes (SGCMGs), using the dynamics proposed in~\cite{schaub1998feedback}. In the case of aircraft, we identify control surface coefficients in a proposed model of the F/A-18 aircraft~\cite{napolitano1996estimation}, which is known to have local instabilities in the form of falling leaf mode.


The paper is organized as follows. In Sec.~\ref{sec:problem_statement}, we propose an alternative formulation of the parameter estimation task. Then, in Sec.~\ref{sec:algorithm}, we present algorithms for solving the parameter estimation problem. Next, we apply the proposed algorithm to a model of spacecraft attitude dynamics using control moment gyros in Sec.~\ref{sec:control-moment-gyros}. Finally, in Sec.~\ref{sec:FA18-model}, we estimate parameters for a model of the F/A-18 aircraft.

\subsection{Notation}
We denote by $\N$, $\R$, and $\R_{+}$ the natural numbers, the real numbers, and the non-negative real numbers. We denote by
$\mbf I$ and $\mbf 0$ the identity and zero matrix. For $J \in \N$ we denote $\overline{1, J} = \{j \in \N \;|\; 1 \leq j\leq J\}$. For $a, b \in \R_+$ we denote ${\lfloor a/b \rfloor} :=\underset{m\in\N, a \geq mb}{\texttt{argmax}} m$ and $a \bmod{b} := a- \lfloor a/b\rfloor b$. For $a, b \in \R^3$, we use $a \times b$ to denote the cross product of vectors $a$ and $b$. For compact $S \subset \R^n$, we denote $C^k(S)$ to be the space of $k$ times continuously differentiable functions with norm $\|f\| = \sup_{s\in S} \|f(s)\|_2$. 
For $u \in C^1(S)$, we denote by $\nabla_x u$ the gradient $\nabla_x u  = \bmat{\frac{\partial}{\partial x_1}u, \dots, \frac{\partial}{\partial x_m}u }^T$.  
For compact $X \subseteq \R^{n}$ we denote $\Pi_X:\R^{n} \rightarrow X$ to be the projection operator so that $\Pi_X(y) = \texttt{argmin}_{x \in X} \|x- y\|_2$. 

 
\section{Parameter Estimation Problem for Nonlinear Differential Equations}\label{sec:problem_statement}

In this section, we formulate the parameter estimation problem for nonlinear ODEs as an optimization task. Specifically, we consider a differential equation model in the state-space representation of the form  \vspace{-2mm} 
\begin{align}
\dot {\mbf x}(t) &= f(t, {\mbf x}(t), \theta) \qquad {\mbf x}(0)  = x \notag\\[-1mm]
y(t) & = g(t, {\mbf x}(t), \theta), \label{eqn:ODE} \\[-7mm] \notag
\end{align}
where $t\in \Gamma \subset \R$ is time, ${\mbf x}(t) \in \R^{n_x}$ are system states, $y(t) \in  \R^{n_y}$ are measured outputs, $\theta \in \Theta$ are unknown parameters and $x \in X$ are initial conditions. We assume that sets $\Theta \subset \R^{n_\theta}$ and $X \subset \R^{n_x}$ are compact and convex. The vector field $f:\R\times\R^{n_x}\times\Theta \rightarrow \R^{n_x}$ and the output function $g:\R\times\R^{n_x}\times\Theta \rightarrow \R^{n_y}$ are continuously differentiable.  In addition, we assume the existence and uniqueness of the solution of Differential Equation~\eqref{eqn:ODE}, where the solution map, $\phi:\Gamma\times X\times \Theta \rightarrow \R^{n_x}$, satisfies  \vspace{-2mm}
    \begin{equation}
        \partial_t \phi(t, x, \theta) = f(t, \phi(t, x, \theta), \theta), \quad
        \phi( 0, x, \theta ) = x. \label{eqn:solutionmap} \vspace{-2mm}
    \end{equation}

Our goal is to identify unknown parameters $\theta \in \Theta$ and initial conditions $x \in X$ based on a set of measurements of the dynamical system of the form\vspace{-2mm}
\begin{equation*}
    Y\hspace{-1mm} =\hspace{-1mm}\{ (y_i, \hspace{-0.5mm}t_i) \;\hspace{-1mm} | \hspace{-1mm}\; y_i \hspace{-0.5mm}=\hspace{-0.5mm} (1+n_i)g(t_i, \phi(t_i, x, \theta), \theta) + m_i,\; \hspace{-1mm}
 i \in \overline{1, N_s}  \},\vspace{-2mm}
\end{equation*}
where $m_i\in \R^{n_y}$ and $n_i \in \R$ represent measurement errors.

The parameter estimation task can be formulated as an optimization problem of the form\vspace{-2mm}
\begin{equation}
    \min_{\substack{x \in X,  \theta \in \Theta}} L(x, \theta, \phi(\cdot, x, \theta)), \label{eqn:optimization_problem_unconstrained}\vspace{-4mm}
\end{equation}
\begin{equation*}
\text{where }L(x, \theta, \phi(\cdot, x, \theta))\hspace{-1mm} =\hspace{-1mm} \frac{1}{2N_s}\sum_{i=1}^{N_s}\hspace{-1mm} \|y_i - g(t_i, \phi(t_i, x, \theta), \theta)\|^2\hspace{-1mm}. \vspace{-2mm}
\end{equation*} 
The objective function in Optimization~\eqref{eqn:optimization_problem_unconstrained} depends on the solution map. However, for nonlinear systems, the solution map is typically unknown. We propose the constrained formulation of the parameter estimation task of the form\vspace{-2mm}
\begin{align} \label{eqn:optimization_problem_constrained}
   \min_{\substack{x \in X, \theta \in \Theta, {\mbf u} \in C(\Gamma)}} \hspace{-4mm} &\; L(x, \theta, {\mbf u})\\[-3.5mm] 
     \text{s.t.} &\;  {\mbf u}(t) = x+\int_{0}^t f(s,{\mbf u}(s),\theta)ds, \notag  \\[-8mm]\notag
\end{align}
where we introduced the infinite-dimensional decision variable ${\mbf u} \in C(\R)$. The integral constraint in~\eqref{eqn:optimization_problem_constrained} implies that ${\mbf u}(t)$ satisfies the ODE in~\eqref{eqn:ODE} with initial condition ${\mbf u}(0)=x$ and parameter values $\theta$.  The variable $\mbf u$ allows us to avoid the need for explicit knowledge of the solution map, but now requires solving an infinite-dimensional optimization problem.



Optimality conditions for the optimization problem in~\eqref{eqn:optimization_problem_constrained} can be derived using methods from~\cite{luenberger1997optimization}. Specifically, the KKT conditions for~\eqref{eqn:optimization_problem_constrained} have the form\vspace{-2mm}
\begin{equation} 
\nabla_{x, \theta} {L}(x, \theta, \mbf u) + \sum_{i = 1}^{N_s}\nabla_{u_i} L(x, \theta, \mbf u) \nabla_{x, \theta} \phi(t_i, x, \theta) = 0,
\label{eqn:optimality_conditions}\vspace{-4mm}
\end{equation}
where \vspace{-2mm}
\begin{equation*} 
\nabla_{u_i} L(x, \theta, \mbf u)\hspace{-1mm}:=\hspace{-1mm}\frac{1}{N_s}\hspace{-1mm} (y_i - g(t_i, \mbf u(t_i), \theta))^T \nabla_u g(t_i, \mbf u(t_i), \theta).\vspace{-2mm}
\end{equation*} 
The introduced notation $\nabla_{u_i} L(x, \theta, \mbf u)$ represents the gradient of the loss function with respect to the estimated solution at the sampled time points $t_i$ --  i.e., $\nabla_{u_i} L := \nabla_{u(t_i)} L$.  Note that the KKT optimality conditions in~\eqref{eqn:optimality_conditions} are identical to the optimality conditions for the unconstrained formulation in~\eqref{eqn:optimization_problem_unconstrained}. Therefore, the optimization problems in~\eqref{eqn:optimization_problem_unconstrained} and~\eqref{eqn:optimization_problem_constrained} are equivalent.

\section{Picard Iterations for Parameter Estimation}
\label{sec:algorithm}

In this section, we propose an algorithm for solving the optimization problem in Eqn.~\eqref{eqn:optimization_problem_constrained}. We accomplish this task in three steps. First, we reformulate the optimization problem in~\eqref{eqn:optimization_problem_constrained} using the Picard operator. Then, using the contraction properties of the Picard operator, we eliminate the need for the infinite-dimensional variable $\mbf u$ and propose an algorithm for solving the optimization problem. Finally, the proposed algorithm is extended through the use of extended Picard iterations. 
\subsection{Picard Operator for the Parameter Estimation Problem}\label{subsec:picard_operator}

Let us first introduce the Picard Operator.\vspace{-1mm}
\begin{defn}
\label{def:picard_operator}
For a given $t_0 \in \R$, $T > 0$ and continuous vector field, $f:\Gamma \times\R^{n_x} \times \Theta \rightarrow \R^{n_x}$, we define the Picard operator $\mcl P_{t_0, x, \theta} : C[0,T]\rightarrow  C[0,T] $\vspace{-2mm}
 \begin{equation}
 (\mcl P_{t_0, x, \theta} \mbf u)(t) := x+ \int_{0}^{t} f(s + t_0, \mbf u(s), \theta) ds,\label{eqn:picard_operator_fixed} \vspace{-2mm}
 \end{equation} 
 and when $t_0\hspace{-1mm}=\hspace{-1mm}0$, we simplify the notation as $\mcl P_{x, \theta}\hspace{-1mm}:=\hspace{-1mm}\mcl P_{0, x, \theta}$.
\end{defn}\vspace{-1mm}

The next theorem establishes that for sufficiently small $T$, the Picard Operator is a contraction map and, therefore,  for {any} $\mbf u(t)$, $\mcl P_{x, \theta} \circ \cdots \circ \mcl P_{x, \theta} \mbf u \rightarrow \mbf u^*$ where $\mcl P_{x, \theta} \mbf u^*=\mbf u^*$.\vspace{-1mm}

\begin{thm}[Picard-Lindel\"of Theorem]\label{thm:picard_contraction}
Let $\Gamma \subset \R$, $X\subset\R^{n_x}$, $\Theta\subset\R^{n_{\theta}}$ be compact and $f \in C(\Gamma\times\R^{n_x}  \times\Theta)$ be locally Lipschitz. Define $C_{a, T} \hspace{-0.5mm}=\hspace{-0.5mm} \{\mbf u \in C([0,T])  |  \|\mbf u\|_{\infty} \leq a\}$. Then, for all $t_0 \in \Gamma$, $x \in X$ and $\theta \in \Theta$, there exist $T > 0$, $a > 0$ and $\mbf u^* \in C_{a, T}$ such that   $\lim\limits_{k \rightarrow \infty}{\mcl P}_{t_0, x, \theta}^k \mbf u=\mbf u^*$ for any $\mbf u \in C_{a, T}$.
\end{thm}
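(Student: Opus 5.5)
The plan is to run the standard Banach fixed-point argument on the closed ball $C_{a,T}$. We choose $a$ large enough to contain all initial conditions with margin, and then shrink $T$ until $\mcl P_{t_0,x,\theta}$ maps $C_{a,T}$ into itself and is a contraction there. Since $C_{a,T}$ is a closed subset of the Banach space $C([0,T])$ with the sup norm, it is complete. The contraction mapping theorem then gives a unique fixed point $\mbf u^*\in C_{a,T}$ with $\mcl P^k_{t_0,x,\theta}\mbf u\to \mbf u^*$ for every $\mbf u\in C_{a,T}$.

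\textbf{Choice of constants.} Since $X$ is compact, set $R=\sup_{x\in X}\|x\|_2<\infty$ and take $a=2R+1$, so that $\|x\|\le a/2$ for all $x\in X$. Let $K=\Gamma'\times\{\|z\|\le a\}\times\Theta$, where $\Gamma'$ is a compact set containing $\Gamma+[0,1]$; this lets the shifted time $s+t_0$ stay in the domain. If $f$ is only defined on $\Gamma$, I would instead restrict to $t_0+s\in\Gamma$, or extend $f$ continuously. I expect this to be a minor technical point and not the real obstacle. The set $K$ is compact, so $M=\sup_K\|f\|<\infty$. Local Lipschitzness on a compact set gives a uniform Lipschitz constant $L$ in the state variable on $K$; the standard argument covers $K$ by finitely many neighborhoods and uses boundedness of $f$ on $K$ to handle pairs of points that are far apart. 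Then choose $T=\min\{1,\,a/(2M),\,1/(2L)\}$.

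\textbf{Self-map.} For $\mbf u\in C_{a,T}$, the map $t\mapsto (\mcl P\mbf u)(t)$ is continuous, since the integral of a bounded function is continuous in its upper limit. It also satisfies $\|(\mcl P\mbf u)(t)\|\le \|x\|+tM\le a/2+TM\le a$, so $\mcl P$ maps $C_{a,T}$ into itself.

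\textbf{Contraction.} For $\mbf u,\mbf v\in C_{a,T}$,
\begin{equation*}
\|(\mcl P\mbf u)(t)-(\mcl P\mbf v)(t)\|\le \int_0^t L\|\mbf u(s)-\mbf v(s)\|ds\le TL\|\mbf u-\mbf v\|_\infty\le \tfrac12\|\mbf u-\mbf v\|_\infty .
\end{equation*}
Banach's theorem then yields the limit $\mbf u^*$, which is the fixed point of $\mcl P_{t_0,x,\theta}$.

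The constants $a$ and $T$ above are uniform in $t_0$, $x$ and $\theta$, which is slightly stronger than the statement requires. The main obstacle is securing the uniform Lipschitz constant $L$ from mere local Lipschitzness on the compact set $K$. I would handle this by contradiction. Suppose pairs of points have difference quotients tending to infinity. Boundedness of $f$ on $K$ forces those pairs to become arbitrarily close, and compactness then gives a common limit point. In a neighborhood of that point the local Lipschitz constant bounds the quotients, which is a contradiction.
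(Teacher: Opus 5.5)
Your proof is correct, and it is the standard Banach fixed-point argument for the Picard operator on a closed ball over a short time interval, which is exactly what the paper delegates to Arnold's textbook (its own proof consists only of that citation). The one point to tighten is the time domain, which you flagged: restricting to $t_0+s\in\Gamma$ gives no $T>0$ when $t_0=\max\Gamma$, and a merely continuous extension need not keep the Lipschitz bound. The clean fix is to take $f$ defined and locally Lipschitz on $\R\times\R^{n_x}\times\Theta$, as in the problem formulation of Sec.~\ref{sec:problem_statement}, or, when $\Gamma$ is an interval, to extend $f$ by clamping the time argument into $\Gamma$.
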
\vspace{-4mm}
\begin{proof}
See Rectification Thms. in~\cite{arnold1992ordinary}.
\end{proof} \vspace{-2mm}


The Picard operator allows us to restate the optimization Problem in~\eqref{eqn:optimization_problem_constrained} as\vspace{-2mm}
\begin{equation} \label{eqn:optimization_problem_constrained_approximated}
   \min_{\substack{x \in X, \theta \in \Theta, \\ \mbf u \in C(\Gamma)}} \hspace{-2mm}  L(x, \theta, \mcl P_{x, \theta}^n\mbf u)\quad  
     \text{s.t.} \quad    \mcl P_{ x, \theta} \mbf u = \mbf u,\vspace{-2mm}
\end{equation} 
where, to exploit the KKT conditions in~\eqref{eqn:optimality_conditions}, we introduce  $\mcl P_{x, \theta}^n$ in the loss function. Note that $L(x, \theta, \mcl P_{x, \theta}^n\mbf u) = L(x, \theta, \mbf u)$ when $\mbf u$ is a fixed point of $\mcl P_{x,\theta}$ -- i.e. $\mcl P_{x, \theta}^n \mbf u = \mbf u$. 

Although the restatement of Optimization Problem~\eqref{eqn:optimization_problem_constrained} still has an infinite-dimensional variable $\mbf u$, the contractive properties of the Picard operator allow us to propose an algorithm for solving the optimization problem in~\eqref{eqn:optimization_problem_constrained_approximated} as established in the next subsection.

\subsection{Algorithm for solving the Parameter Estimation Problem} 
\label{subsec:algorithm}
\begin{figure}[t]
\begin{algorithm}[H]
\begin{algorithmic}
{\small
\item
\texttt{INPUT:} $N$ -- the number of iterations \\
\hspace{11mm} $n$ -- the order of Picard iterations\\
\hspace{11mm} $x_0 \in X$, $\theta_0 \in \Theta$, ${{\mbf u}}_0 = x_0$, $\alpha  > 0, \sigma \in (0, 1]$\\
\texttt{For} $k$ from $0$ to $N-1$\\
\qquad $\theta_{k+1} := \Pi_\Theta[ \theta_{k} - {\alpha \nabla_\theta L(x_{k}, \theta_{k}, \mcl P^n_{x_{k}, \theta_{k}}{{\mbf u}}_{k}))}]$ \\
\qquad $x_{k+1} :=  \Pi_X[x_{k} - {\alpha \nabla_{x} L(x_{k}, \theta_{k}, \mcl P^n_{x_{k}, \theta_{k}}{{\mbf u}}_{k})}]$ \\
\qquad${{\mbf u}}_{k+1}(t)  := (1-\sigma) {{\mbf u}}_{k}(t) + \sigma(\mcl P_{x_{k+1}, \theta_{k+1}} {{\mbf u}}_{k})(t)$ \\
\texttt{EndFor} \\
\texttt{OUTPUT:} $x_{N}, \theta_{N}, {{\mbf u}}_{N}$
}
\end{algorithmic}
\caption{Gradient-Contract-Multiplier Algorithm}  \label{alg3:modified_picard}
\end{algorithm}\vspace{-8mm}
\end{figure}
In this subsection, we propose an algorithm for solving the optimization problem in~\eqref{eqn:optimization_problem_constrained_approximated}. To achieve the goal, we focus on the gradient-projection algorithm for constrained optimizations. Gradient-projection algorithms update the decision variables by taking a gradient step and then projecting the solution to the feasible set that can be defined as $\{\mbf u\;|\; \mbf u = \mcl P_{x,\theta}\mbf u\}$. However, the infinite-dimensional nature of $\mbf u$ prevents direct application of gradient-projection to problem~\eqref{eqn:optimization_problem_constrained_approximated}. Instead, Alg.~\ref{alg3:modified_picard} uses the contractive properties of the Picard Operator to update the infinite-dimensional variable ${{\mbf u}}_k$ -- ensuring $\mbf u_k = \mcl P_{x,\theta}\mbf u_k$ only in the limit $k \rightarrow \infty$.

 In the first step, given $\mbf u_k$, Alg.~\ref{alg3:modified_picard} applies gradient descent for finite-dimensional variables $x, \theta$ as\vspace{-2mm}
\begin{align} 
\theta_{k+1} &= \Pi_\Theta[\theta_k - {\alpha \nabla_\theta L(x_k, \theta_k, {{\mbf u}}_k)}] \label{eqn:alg_step1}\\ 
 x_{k+1} &= \Pi_X[x_k - {\alpha \nabla_{x} L(x_k, \theta_k, {{\mbf u}}_k)}], \notag\\[-8mm]\notag
\end{align} 
where the gradient is taken for the loss function and the Picard Operator. Here $\alpha > 0$ is a step size and ${\Pi}_X : \R^{n_x} \rightarrow \R^{n_x}$ is a projection operator defined as\vspace{-2mm}
\begin{equation}\label{eqn:projection_operator}
    {\Pi}_X(z) := \underset{x \in X}{\texttt{argmin}} \|z - x\|^2_2.\vspace{-2mm}
\end{equation}
Then, in step 2, Alg.~\ref{alg3:modified_picard} uses the Picard operator  to update the infinite-dimensional variable $\mbf u$ as follows\vspace{-2mm}
\begin{equation}
{{{\mbf u}}_{k+1}} = (1- \sigma) {{\mbf u}}_k + \sigma \mcl P_{ x_{k+1}, \theta_{k+1}} \mbf u_k,\label{eqn:alg_step2}\vspace{-2mm}
\end{equation}
where $\sigma$ is a step size. Note that if $\mcl P_{ x_{k+1}, \theta_{k+1}}$ is a contraction, the second step of Alg.~\ref{alg3:modified_picard} updates $\mbf u$ closer to the feasible set as shown to Thm.~\ref{thm:picard_contraction}. 

To analyze the convergence of Alg.~\ref{alg3:modified_picard}, let us consider a fixed point of Alg.~\ref{alg3:modified_picard}. Specifically, the fixed point $\{x, \theta, \mbf u\}$  satisfies\vspace{-2mm}
\begin{align}
&\nabla_{x,\theta} L(x, \theta, \mcl P^n_{ x, \theta}\mbf u) = \nabla_{x,\theta} L(x, \theta, \mbf u) \label{eqn:KKT_approximation}\\[-1mm]
&\qquad +\sum\nolimits_{i = 1}^{N_s}\nabla_{u(t_i)} L(x, \theta, \mbf u) \nabla_{x,\theta} (\mcl P^n_{ x, \theta}\mbf u)(t_i, x, \theta)=0.\notag \\[-7.5mm]\notag
\end{align}

The conditions in~\eqref{eqn:KKT_approximation} are then an approximated version of the KKT optimality conditions in Eqn.~\eqref{eqn:optimality_conditions} if $(\nabla_{x, \theta}\mcl P^n_{ x, \theta}\mbf u)(t_i, x, \theta) \rightarrow \nabla_{x, \theta} \phi(t_i, x, \theta)$, where $\phi$ is the solution map as in~\eqref{eqn:solutionmap}. The next theorem establishes the convergence of the gradients of the Picard operator. \vspace{-1mm}

\begin{thm}\label{thm:picard_sensitivity}
Let $\Gamma, X, \Theta$ be compact, $f \in C^1(\Gamma\times\R^{n_x}  \times\Theta)$ and $\nabla_x f, \nabla_\theta f$ be locally Lipschitz continuous. Define $C_{a,T} = \{\mbf u \in C([0, T])| \|\mbf u\|_\infty \leq a \}$.
 Then for all $t\in\Gamma$, $x \in X$, $\theta\in \Theta$, there exists $a > 0$ and $T>0$ such that  for any  $\mbf u \in C_{a, T}$ we have $\lim_{n\rightarrow \infty} \nabla_{x,\theta} ({\mcl P}_{ x, \theta}^n \mbf u)(t) = \nabla_{x, \theta} \phi(t , x, \theta)$
where $\phi(t, x, \theta)$ is the solution map (Eqn.~\eqref{eqn:solutionmap}).
\end{thm}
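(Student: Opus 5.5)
We treat the sensitivities themselves as the fixed point of a second contraction, the Picard operator of the variational (sensitivity) equation. Write $\mbf u_n := \mcl P^n_{x,\theta}\mbf u$ and $\mbf v_n := \nabla_{x,\theta}\mbf u_n$, the latter an $n_x\times(n_x+n_\theta)$-matrix valued function on $[0,T]$. Since $\mbf u$ is fixed and independent of $(x,\theta)$, we have $\mbf v_0=0$. Differentiating the definition of $\mcl P_{x,\theta}$ under the integral, which is justified since $f\in C^1$ and the integrand is continuous on a compact set, gives the recursion
\begin{equation*}
\mbf v_{n+1}(t) = \bmat{\mbf I & \mbf 0} + \int_0^t \Big(\nabla_x f(s,\mbf u_n(s),\theta)\,\mbf v_n(s) + \bmat{\mbf 0 & \nabla_\theta f(s,\mbf u_n(s),\theta)}\Big)ds .
\end{equation*}
On the other side, by standard differentiability of the solution map with respect to initial data and parameters, $\mbf w(t):=\nabla_{x,\theta}\phi(t,x,\theta)$ satisfies the same integral equation with $\mbf u_n$ replaced by $\mbf u^*=\phi(\cdot,x,\theta)$:
\begin{equation*}
\mbf w(t)=\bmat{\mbf I & \mbf 0}+\int_0^t \Big(\nabla_x f(s,\mbf u^*(s),\theta)\mbf w(s)+\bmat{\mbf 0 & \nabla_\theta f(s,\mbf u^*(s),\theta)}\Big)ds .
\end{equation*}

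First, invoke Thm.~\ref{thm:picard_contraction} to choose $a,T$ so that $\mcl P_{x,\theta}$ maps $C_{a,T}$ into itself and is a contraction with constant $q<1$. This gives $\|\mbf u_n-\mbf u^*\|_\infty\le q^n\|\mbf u-\mbf u^*\|_\infty\to0$, and $\mbf u^*$ coincides with $\phi(\cdot,x,\theta)$ on $[0,T]$ by uniqueness. Since all iterates lie in the compact set $[0,T]\times\{\|z\|\le a\}\times\Theta$, there are constants $M$ bounding $\|\nabla_x f\|$ there and $K$ serving as the Lipschitz constant of $\nabla_x f,\nabla_\theta f$ there. We may shrink $T$ so that additionally $MT\le q<1$.

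Next, set $e_n=\mbf v_n-\mbf w$ and subtract the two integral equations. Adding and subtracting $\nabla_x f(s,\mbf u^*,\theta)\mbf v_n$ gives
\begin{equation*}
\|e_{n+1}\|_\infty \le MT\|e_n\|_\infty + KT\big(\|\mbf v_n\|_\infty+1\big)\|\mbf u_n-\mbf u^*\|_\infty .
\end{equation*}
Before using this, we need a uniform bound on $\|\mbf v_n\|_\infty$. The recursion gives $\|\mbf v_{n+1}\|\le 1+ T\sup\|\nabla_\theta f\| + MT\|\mbf v_n\|$, so with $MT<1$ the sequence $\|\mbf v_n\|_\infty$ is bounded by some $B$ independent of $n$. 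Then $\|e_{n+1}\|\le q\|e_n\|+c\,q^n$, and iterating yields $\|e_n\|\le q^n\|e_0\|+c\,n q^{n-1}\to0$. This gives uniform convergence on $[0,T]$, hence pointwise convergence at each $t\in[0,T]$.

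The main obstacle is the quantifier structure. The claim is stated for arbitrary $t\in\Gamma$, but the contraction only holds on a short interval $[0,T]$, so I would interpret the statement, as for Thm.~\ref{thm:picard_contraction}, with $t\in[0,T]$ and $T,a$ chosen depending on $(x,\theta)$. A secondary subtlety is that $\mbf u\in C_{a,T}$ must lie in the ball where Thm.~\ref{thm:picard_contraction} guarantees invariance, and the constants $M,K$ must be taken uniformly over that ball. If needed, the bound $MT<1$ can instead be replaced by a Gronwall or weighted-norm argument, using $\|h\|_\lambda=\sup e^{-\lambda t}\|h(t)\|$, so that the result does not require shrinking $T$ further.
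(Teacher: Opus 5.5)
Your proof is correct. The paper gives no argument of its own here; it only cites Prop.~14 of \cite{talitckii2024picard}. So your proposal is a self-contained replacement. Your reading of the quantifiers ($t\in[0,T]$, with $a,T$ depending on $(x,\theta)$) is the only one under which $(\mcl P^n_{x,\theta}\mbf u)(t)$ is even defined for $\mbf u\in C([0,T])$.

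Two small tightenings:
\begin{itemize}
\item Thm.~\ref{thm:picard_contraction} as stated asserts only convergence of the iterates. The invariance of $C_{a,T}$ and the contraction constant $q$ must therefore come from its standard proof: pick $a>\|x\|$, then $T$ small.
\item No separate shrinking of $T$ is needed to get $MT\le q$. By the mean value inequality on the convex ball $\{\|z\|\le a\}$, $MT$ is itself a contraction constant for $\mcl P_{x,\theta}$ on $C_{a,T}$, so you may simply take $q=MT<1$.
\end{itemize}

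The hypothesis that $\nabla_x f,\nabla_\theta f$ be locally Lipschitz is exactly what a shorter argument needs. Your recursion says that $(\mbf u_{n+1},\mbf v_{n+1})$ is the Picard image of $(\mbf u_n,\mbf v_n)$ for the augmented system $\dot z=f(s,z,\theta)$, $\dot V=\nabla_x f(s,z,\theta)V+\bmat{\mbf 0 & \nabla_\theta f(s,z,\theta)}$ with data $(x,\bmat{\mbf I & \mbf 0})$. That vector field is locally Lipschitz in $(z,V)$ precisely under the stated hypotheses. Hence Thm.~\ref{thm:picard_contraction}, applied to it from the starting point $(\mbf u,\mbf 0)$, gives convergence to its fixed point $(\phi,\nabla_{x,\theta}\phi)$ in a couple of lines.

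Your direct error recursion buys more than that reduction:
\begin{itemize}
\item It gives an explicit rate $O(nq^{n-1})$.
\item The horizon is fixed by the primal contraction alone ($MT<1$). The augmented reduction would instead be governed by the Lipschitz constant of the augmented field, which also involves $K$ times the size of $V$.
\item It does not actually need the Lipschitz hypothesis. Replace $K\|\mbf u_n-\mbf u^*\|_\infty$ by the modulus of continuity of $\nabla_x f,\nabla_\theta f$ on the compact set. This gives $\|e_{n+1}\|_\infty\le q\|e_n\|_\infty+\varepsilon_n$ with $\varepsilon_n\to 0$, hence $e_n\to 0$ for $f\in C^1$ alone. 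Lipschitz continuity only supplies the rate.
\end{itemize}
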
 \vspace{-3mm}
 \begin{proof}
See Prop.~14 in~\cite{talitckii2024picard}.
\end{proof} \vspace{-2mm}
By Thm.~\ref{thm:picard_sensitivity}, the fixed point of Alg.~\ref{alg3:modified_picard} satisfies the optimality conditions of the constrained optimization problem in~\eqref{eqn:optimization_problem_constrained} in an approximated sense. Numerical examples demonstrate that for practical usage, $n = 1, 2$ or $3$ is sufficient for highly accurate solutions. However, Alg.~\ref{alg3:modified_picard} converges to the fixed point only if the Picard Operator is a contraction, which can be guaranteed by choosing a sufficiently small $T$. Thus, the time interval for which the Picard operator is a contraction may be smaller than the interval on which the data is sampled. 
For such cases, we extend Alg.~\ref{alg3:modified_picard} in the next subsection.

\subsection{Extended Algorithm for Parameter Estimation}\label{subsec:algorithm_extended}
\begin{figure}[t]
\begin{algorithm}[H]
\begin{algorithmic}
{\small
\item
 \texttt{INPUT:} $N$ -- the number of iterations \\
 \hspace{11mm} $n$ -- the order of Picard iterations \\
 \hspace{11mm} $J$ -- the number of time intervals \\
 \hspace{11mm} $T > 0$, $\theta_0 \in \Theta$, $x_0 \in X^{\otimes J}$, ${\mbf u}_{0} \in C^{\otimes J}[0, T)$,  \\
 \hspace{11mm} $\lambda \geq 0$, $\alpha  > 0, \sigma \in (0, 1]$\\
 \texttt{INIT:} $\mbf u_j(t) = x_{0,j}$ for all $j \in \overline{1, J}$\\
 \texttt{DEFINE:} $L_{\lambda, n}(x, \theta,\mbf u)$ as in Eqn.~\eqref{eqn:extended_loss}\\
 \texttt{For} $k$ from $0$ to $N-1$\\
\quad $\theta_{k+1} := \Pi_\Theta[ \theta_{k} - \alpha \nabla_\theta L_{\lambda, n} (x_{k}, \theta_{k},{\mbf u}_{k})]$ \\
\quad \texttt{For} $j$ from $1$ to $J$\\
\quad\quad $ {x}_{k+1, j} := \Pi_X[x_{k, j} - {\alpha \nabla_{x_j} L_{\lambda, n}(x_k, \theta_k, {\mbf u}_k \big)}]$ \\ 
\quad\quad $\mbf u_{k+1, j}  := (1-\sigma) \mbf u_{k, j} + \sigma(\mcl P_{(j-1)T, x_{k+1, j}, \theta_{k+1}} \mbf u_{k, j}) $ \\
\qquad \texttt{EndFor}\\ 
\texttt{EndFor} \\
\texttt{OUTPUT:} $x_{N}, \theta_{N}, {{\mbf u}}_{N}$
}
\end{algorithmic}
\caption{Extended Grad-Contract-Multiplier Alg.}  \label{alg4:ext_picard}
\end{algorithm}\vspace{-8mm}
\end{figure}

To apply Alg.~\ref{alg3:modified_picard} for data sampled on large time intervals, we consider a modified version of extended Picard iterations~\cite{peet2010converse}. Specifically, we create a partition of the time domain into smaller disjoint subintervals, such that the Picard Operator is contractive on each of these subintervals. Formally, given a convergence interval $T>0$, we assume that $\Gamma \subseteq [0, JT)$. Then, we lift the the solution  $\mbf u \in C[0, T]^{\otimes J}$ and initial conditions $x \in X^{\otimes J} = X \times \dots \times X$. The solutions $\mbf u_j$ represent estimated solutions on the time interval $t \in [(j-1)T, jT)$. Then, the optimization problem in~\eqref{eqn:optimization_problem_constrained} may be reformulated as \vspace{-2mm}
\begin{align} 
\hspace{-1mm}\min_{\substack{x \in X^{\otimes J},\, \theta \in \Theta\\ \mbf u \in C[0,T]^{\otimes J}}} &\;   \frac{1}{2N_s} \hspace{-1mm} \sum\nolimits_{i=1}^{N_s}\|y_i - g(t_i, \mbf u_{\lfloor t_i/T \rfloor}(t_i\hspace{-0mm}\bmod{T}), \theta)\|_2^2\notag\\[-2mm]
     \text{s.t.} \; & \mcl P_{(j-1)T, x_j, \theta} \mbf u_j = \mbf u_j \qquad \forall j ,\,  t\in [0,T] \label{eqn:optimization_problem_constrained_extended1} \\
      & \mbf u_{j}(T)  = x_{j+1} \qquad \forall j \in \overline{1, J-1},\notag\\[-8mm]\notag 
\end{align}
Constraints in the optimization problem~\eqref{eqn:optimization_problem_constrained_extended1} ensure that the optimization problems in~\eqref{eqn:optimization_problem_constrained} and~\eqref{eqn:optimization_problem_constrained_extended1} are equivalent.\vspace{-1mm}
\begin{thm}Let $\Gamma = [0, JT]$, then Optimization Problems~\eqref{eqn:optimization_problem_constrained_approximated} and~\eqref{eqn:optimization_problem_constrained_extended1} are equivalent.
\end{thm}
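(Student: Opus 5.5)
We argue equivalence by building an explicit bijection between the feasible sets of Problems~\eqref{eqn:optimization_problem_constrained_approximated} and~\eqref{eqn:optimization_problem_constrained_extended1} that preserves the objective value. Equal optimal values and corresponding minimizers then follow.

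For Problem~\eqref{eqn:optimization_problem_constrained_approximated}, note first that the constraint $\mcl P_{x,\theta}\mbf u=\mbf u$ forces $\mcl P^n_{x,\theta}\mbf u=\mbf u$. The objective is therefore just $L(x,\theta,\mbf u)$. The constraint also says $\mbf u$ is the solution of~\eqref{eqn:ODE} on $\Gamma=[0,JT]$, i.e.\ $\mbf u(t)=\phi(t,x,\theta)$, by the assumed existence and uniqueness.

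\textbf{Forward map.} Given feasible $(x,\theta,\mbf u)$, define
\[
\mbf u_j(s):=\mbf u((j-1)T+s),\quad s\in[0,T],\qquad x_j:=\mbf u((j-1)T).
\]
In particular $x_1=x$. Splitting the integral in the Picard constraint at $(j-1)T$ and substituting $s'=s-(j-1)T$ gives
\[
\mbf u_j(s)=\mbf u((j-1)T)+\int_0^s f(s'+(j-1)T,\mbf u_j(s'),\theta)\,ds',
\]
which is exactly $\mcl P_{(j-1)T,x_j,\theta}\mbf u_j=\mbf u_j$ as in Definition~\ref{def:picard_operator}. The matching condition $\mbf u_j(T)=\mbf u(jT)=x_{j+1}$ holds by construction. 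Each sample $t_i$ lies in the piece $\lfloor t_i/T\rfloor$ (with the indexing convention of~\eqref{eqn:optimization_problem_constrained_extended1}) at local time $t_i\bmod T$. Hence the two objectives coincide term by term.

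\textbf{Backward map.} Given feasible $(x_1,\dots,x_J,\theta,\mbf u_1,\dots,\mbf u_J)$, glue the pieces into $\mbf u(t):=\mbf u_j(t-(j-1)T)$ for $t\in[(j-1)T,jT)$, and set $x:=x_1$. Since $\mbf u_j(0)=x_j=\mbf u_{j-1}(T)$, the glued function is continuous. We then prove by induction on $j$ that $\mbf u(t)=x+\int_0^t f(s,\mbf u(s),\theta)\,ds$ on $[0,jT]$. At the inductive step we write
\[
\mbf u(t)=x_{j+1}+\int_{jT}^t f(s,\mbf u(s),\theta)\,ds
\]
and use $x_{j+1}=\mbf u(jT)$ together with the inductive hypothesis to telescope the integral back to $0$. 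The objective again matches by the same index bookkeeping, and the two maps are mutually inverse.

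\textbf{Main obstacle.} The delicate part is bookkeeping rather than analysis. Specifically:
\begin{itemize}
\item[(i)] Reconcile the index $\lfloor t_i/T\rfloor$ with the labeling $j\in\overline{1,J}$, i.e.\ fix an off-by-one convention, and handle the endpoint $t_i=JT$, assigning it to $\mbf u_J(T)$.
\item[(ii)] Make sure the projections onto $X$ do not break feasibility. The interior initial conditions $x_j=\phi((j-1)T,x,\theta)$ must lie in $X$. We either assume $X$ contains the reachable states, or read the statement as equivalence with $x_j$ unconstrained for $j\ge2$.
\end{itemize}
I would state the convention in (ii) explicitly and then run the two maps above.
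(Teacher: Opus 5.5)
Your proposal is correct and follows essentially the same route as the paper, which gives no argument of its own beyond citing Lem.~4 of its companion work. Your bijection (restrict a fixed point of $\mcl P_{x,\theta}$ on $[0,JT]$ to the pieces $[(j-1)T,jT]$; conversely, concatenate the $\mbf u_j$ and telescope the integrals using $\mbf u_j(T)=x_{j+1}=\mbf u_{j+1}(0)$) is the natural content of that lemma, and your two caveats are well taken: the index should read $\lfloor t_i/T\rfloor+1$ with $t_i=JT$ assigned to $\mbf u_J(T)$, and $\phi((j-1)T,x,\theta)\in X$ for $j\ge 2$ is a genuine hypothesis rather than bookkeeping, since without it the gluing direction alone only shows that the optimal value of~\eqref{eqn:optimization_problem_constrained_approximated} is at most that of~\eqref{eqn:optimization_problem_constrained_extended1}.
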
  \vspace{-3mm}
 \begin{proof}
See Lem. 4 in~\cite{talitckii2024picard}.
\end{proof} \vspace{-2mm}

The optimization in~\eqref{eqn:optimization_problem_constrained_extended1} has binding constraints $\mbf u_j(T) = x_{j+1}$ and functional equality constraints $\mbf u_j = \mcl P_{(j-1)T, x_j, \theta} \mbf u_j$. Similar to Alg.~\ref{alg3:modified_picard}, we may process the functional constraints using the Picard operator. However, the additional challenge is to consider the binding constraints. To process the binding constraints $\mbf u_j(T) = x_{j+1}$, we introduce a penalty function in the objective function as follows \vspace{-2mm}
\begin{align} \label{eqn:optimization_problem_constrained_extended}
 \hspace{-1mm}  \min_{\substack{x \in X^{\otimes J}, \theta \in \Theta\\ \mbf u \in C[0,T]^{\otimes J}}}\hspace{-0.5mm} &\; \frac{1}{2N_s} \hspace{-1mm} \sum\nolimits_{i=1}^{N_s} \hspace{-0.5mm} \|y_i\hspace{-0.5mm} -\hspace{-0.5mm} g(t_i, \mbf u_{\lfloor t_i/T \rfloor}(t_i\hspace{-0mm}\bmod{T}), \theta)\|_2^2 \notag \\[-5mm]
    &\qquad\qquad + \lambda \sum\nolimits_{j=1}^{J-1}\|\mbf u_{j}(T) - x_{j+1}\|_2^2\\[-1mm]
     \text{s.t.} \;&\;  \mcl P_{(j-1)T, x_j, \theta} \mbf u_j  = \mbf u_j  \qquad \forall j \in \overline{1, J},\notag\\[-7mm]\notag
\end{align}
where $\lambda \geq 0$ is a regularization parameter. 

Next, since $\mcl P_{(j-1)T, x_j, \theta} \mbf u_j  = \mbf u_j$, the objective function in the optimization in~\eqref{eqn:optimization_problem_constrained_extended} may be reformulated as \vspace{-2mm}
\begin{align}\label{eqn:extended_loss}
& L_{\lambda, n}(x, \theta, \mbf u) \hspace{-1mm}:= \hspace{-1mm} \sum_{j=1}^{J-1} \lambda \|\mcl P^n_{(j-1)T, x_j, \theta} \mbf u_{j}(T) - x_{j+1}\|_2^2\\[-3mm]
&\hspace{-3mm} +\hspace{-1mm}\frac{1}{2N_s}\hspace{-1mm}  \sum_{i=1}^{N_s}\hspace{-1mm}\|y_i \hspace{-0.5mm}-\hspace{-0.5mm} g(t_i, (\mcl P^n_{(j-1)T, x_j, \theta} \mbf u_{\lfloor t_i/T \rfloor})(t_i\hspace{-0mm}\bmod{T}), \theta)\|_2^2. \notag  \\[-8mm]\notag
\end{align} 

For the function $L_{\lambda, n}$ we define Alg.~\ref{alg4:ext_picard}.

Although the extended algorithm (Alg.~\ref{alg4:ext_picard}) allows us to approximately solve the parameter estimation problem on an arbitrary time interval, the approach introduces additional challenges. Specifically, replacing the binding constraints $\mbf u_j(T) = x_{j+1}$ with the penalty function may lead to discontinuities in the estimated solution $\mbf u$. The parameter $\lambda$ is a weight in the objective function, which allows us to increase the penalty function -- implying smaller discontinuities. However, the increased $\lambda$ also reduces the sensitivity to the data samples and hence implies the need for smaller step sizes to ensure convergence. In the next sections, we consider a parameter estimation problem for two attitude dynamics models: the F/A-18 aircraft dynamical model and the 4 control-moment gyro (CMG) model of spacecraft motions.

%

\section{Parameter Estimation of Rigid-Body Dynamics with Single Gimbal Control Moment Gyros.}\label{sec:control-moment-gyros}
In this section, we apply Alg.~\ref{alg4:ext_picard} to estimate the inertia tensor of a spacecraft. Specifically, we consider a 4 control-moment gyro (CMG) model of spacecraft motion, as derived in~\cite{schaub1998feedback}.
%
The model has states of the form\vspace{-2mm}
\begin{equation} \label{eqn:states-CMG}
x(t) = [\omega^T(t), \gamma^T(t), \dot\gamma^T(t), \Omega^T(t)]^T \in \R^{15},\vspace{-2mm}
\end{equation}
where $\omega(t) \in \R^3$ is the satellite angular velocity in the body-fixed coordinates, $\gamma(t) \in \R^4$ are gimbal angles and $\Omega(t) \in \R^4$ are angular rates of the gyro's wheels. 

We assume that the only measurable outputs of the system are $\omega(t)$ and $\gamma(t)$. The inputs $M_g(t) \in \R^4$ and $M_h(t) \in \R^{4}$ are the gimbal and gyro's wheel motor torques.\vspace{-1.5mm}
\begin{equation} \label{eqn:outputs-inputs-CMG}
y(t) = g(x(t)) = [\omega^T(t), \gamma^T(t)]^T\hspace{-2mm}, \;\; u(t) = [M_g(t)^T, M_h(t)]^T\hspace{-2mm}. \vspace{-1mm}
\end{equation}
We estimate the inertia tensor of the spacecraft body, $J_B$. Symmetry of the spacecraft implies that the inertia tensor has a diagonal structure --- i.e., $J_B \hspace{-0.5mm}=\hspace{-0.5mm}\mathbf{diag}(J_{Bx},J_{By},J_{Bz})$. Thus, estimation parameters are $\theta\hspace{-0.5mm} =\hspace{-0.5mm} [J_{B_x}, J_{B_y}, J_{B_z}]\hspace{-0.5mm} \in \hspace{-0.5mm}\R^3$.

\subsection{The Equations of Motions}

Let the spacecraft reference frame be given by $\{b_x, b_y, b_z\}$ and the gimbal reference frames be defined by $\{f_i, g_i, h_i\}$, where $h_i$ represents the main gyroscope axis, $g_i$ is a fixed gimbal axis in the spacecraft frame, and $f_i$ defines the gyroscopic moment output. Then, given gimbal angles $\gamma(t)$, the rotated gimbal frames have the form \vspace{-2mm}
\begin{align*}
h_i(t) &= h_i(0) \cos\gamma(t) + f_i(0) \sin\gamma(t)\\
f_i(t) &= -h_i(0) \sin\gamma(t) + f_i(0) \cos\gamma(t).\\[-8mm]
\end{align*}
Inertia tensors for gimbal frame and the gyroscopic wheel are diagonal matrices of the form $J_G = \mathbf{diag}(J_{Gx},J_{Gy},J_{Gz})$, $J_W = \mathbf{diag}(J_{Wx},J_{Wy},J_{Wz})$.

Then, the total system inertia of the system has the form\vspace{-2mm}
\[
J_S(t) = J_B + J_{Tx} F(t) F(t)^T + J_{Ty } G G^T + J_{Tz} H(t) H(t)^T,\vspace{-2mm}
\]
where $J_{Ti} = J_{Gi} + J_{Wi}$, $G = [g_1, \dots, g_n]$ and\vspace{-2mm}
\begin{equation*}
F(t) = [f_1(t), \dots, f_n(t)],\;\; H(t) = [h_1(t), \dots, h_n(t)].\vspace{-2mm}
\end{equation*}
Next, the equations of motions are established in~\cite{valk2018directional}. \vspace{-2mm}
\begin{align*}
 \Phi(t)\bmat{\dot \omega(t) \\ \ddot \gamma(t) \\ \dot \Omega(t)} \hspace{-1mm}=\hspace{-1mm}   \hspace{-0.5mm}\bmat{\mbf 0 \\ M_g(t) \\ M_h(t)}\hspace{-0.5mm} - \hspace{-0.5mm}\bmat{M_\omega(t)\hspace{-0.5mm} +\hspace{-0.5mm} \omega(t)\hspace{-1mm} \times \hspace{-1mm}(J_s(t) \omega(t)) \\ -M_\gamma(t)\\ -M_\Omega(t)},\hspace{-0.5mm} \\[-7.5mm]
\end{align*}
where $M_g(t), M_h(t) \in \R^4$ are the control inputs representing motor torques and 
$\Phi(t)$ is defined as \vspace{-2mm}
\[\small
\Phi(t) = \bmat{J_s(t) & J_{Ty} G & J_{Wz} H(t) \\ J_{Ty} G^T & J_{Ty}\mbf I & \mbf 0 \\ J_{Wz} H(t)^T & \mbf 0 & J_{Wz} \mbf I}.\vspace{-2mm}
 \] 
 We use $M_\omega(t)$, $M_\gamma(t)$  for notational clarity to denote\vspace{-2mm}
\begin{align*}
M_\omega & = J_{Wz} F \mathbf{diag}(\Omega)(\gamma + G^T \omega)- J_{Wz} G\mathbf{diag}(\Omega) F^T\omega\\[-0.5mm]
& \qquad+ (J_{Tz} - J_{Tx} + J_{Ty})H \mathbf{diag}(F^T \omega) \dot \gamma \\[-0.5mm]
& \qquad+ (J_{Tz} - J_{Tx} - J_{Ty})F\mathbf{diag}(H^T \omega) \dot\gamma\\ 
M_\gamma &=\mathbf{diag}(F^T \omega) ((J_{Tz} - J_{Tx}) H^T \omega + J_{Wz} \Omega)\\
M_\Omega &= -J_{Wz} \mathbf{diag}(F^T \omega)\dot\gamma.\\[-7mm]
\end{align*}

\subsection{Estimation of the Inertia Tensor of the Rigid-Body}

%


To estimate the unknown parameters $\theta \in \R^3$, we assume that the inertia tensor for the gyro's wheel is $J_{Wx} = J_{Wy} = 0.5 J_{Wz} =  1\; \text{kg}\cdot\text{m}^2$. The inertia tensor for the gimbal frame  is $J_{Gx} = J_{Gy} = J_{Gz} = 0.01\; \text{kg}\cdot\text{m}^2$. Finally, we consider four possible inputs for the system of the form $u_i(t) = [M_{g, i}(t)^T, M_{h, i}(t)^T]$, where $M_{g,1}(t) = M_{h, 1}(t) = 0$; $M_{g,2}(t) = 40cos(10t)$ and $M_{h, 2}(t) = 0$; $M_{g,3}(t) = 0$ and $M_{h, 3}(t) = 100cos(10t)$; $M_{g,4}(t) =\begin{cases} -4 & \text{if } t < 1.5 \\ 0 & \text{otherwise} \end{cases}$ and $M_{h, 4}(t) = 0$.

For each input,  $j = 1, ..., 4$ we generate a data set of the form\vspace{-2mm}
\[
y_{ij} =  (1 + n_i) g(\phi_{u_j}(t_i, x_0, \theta)) + m_i,\vspace{-2mm}
\]
where $g$ is the output function as in~\eqref{eqn:outputs-inputs-CMG} and $\phi_{u_j}$ is the solution map representing the dynamic of the states in~\eqref{eqn:states-CMG} with input $u_j$. $n_i, m_i$ are normally distributed random variables with zero mean and variances of $0.05$. True values of the inertia tensor of the spacecraft (estimated parameters $\theta$) are $J_B = \textbf{diag}([214, 201, 500])$, the time samples are $t_i =  \{0:0.01:3\}$, and initial conditions are given as\vspace{-2mm}
\begin{align*}
 &\gamma_0 = [0.1, 0.1,0.1,0.1]^T, \omega_0 = [0.1,0.1,0.1]^T, \\
 &\dot \gamma_0 = [0.1,0.1,0.1,0.1], \Omega_0 = [10,10,10,10]^T.\\[-8mm]
\end{align*}

For estimating parameters, we use Alg.~\ref{alg4:ext_picard} with $\lambda = 10$, $n = 3$ and $T = 0.2$. Alg.~\ref{alg4:ext_picard} returns estimates of the parameters is $\hat \theta = [215, 207, 527]$. The simulation results for estimated parameters and sampled data are presented in Fig.~\ref{fig:CMG}.

\begin{figure}
\centering\vspace{1mm}
\includegraphics[width = 0.9\linewidth]{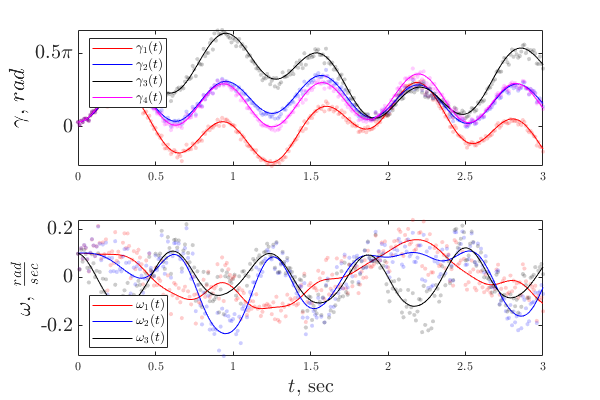}\vspace{-2mm}
\caption{Parameter Estimation using Alg.~\ref{alg4:ext_picard} for the Rigid-Body Satellite with $4$ SGCMGs. The dots represent measurements, the solid line is the predicted outputs for input $u_2$. True parameter values are
$\theta = [214, 201, 500]$. Estimated parameter values are $\hat \theta = [215, 207, 527]$.}\vspace{-5mm}
\label{fig:CMG}
\end{figure}

%
 
\section{Parameter Estimation in Flight Dynamics}\label{sec:FA18-model}

In this section, we consider a model of aircraft attitude dynamics. Specifically, we estimate control surface coefficients in the model of the F/A-18 aircraft. The dynamics describe the position, orientation, and velocity of the vehicle. Although the basics of the aircraft dynamics are often described from first principles, the complexity of aircraft geometry and aerodynamics implies the need for experimental data to estimate model parameters such as lift coefficients~\cite{song2002comparative}. 

The position of the aircraft can be described by the following states\vspace{-2mm}
\begin{equation}\label{eqn:states-FA}
x(t)  =[ v(t), \alpha(t), \beta(t), p(t), q(t), r(t), \theta(t), \phi(t), \psi(t)] \in \R^9,\vspace{-1mm}
\end{equation}
where $v(t)$ represents the aircraft velocity, $\alpha(t)$ is the angle-of-attack, $\beta(t)$ is the side-slip angle, $p(t)$ is the roll rate, $q(t)$ is the pitch rate, $r(t)$ is the yaw rate, $\theta(t), \phi(t), \psi(t)$ are the pitch, bank and yaw angles. 

The inputs of the system have the form \vspace{-2mm}
\begin{equation}\label{eqn:inputs-FA}
u(t) = [T(t), \delta_{stab}(t), \delta_{rud}(t), \delta_{ail}(t)] \in \R^4,\vspace{-2mm}
\end{equation}
where $T(t)$ is the thrust,  $\delta_{stab}(t)$, $\delta_{rud}(t)$, $\delta_{ail}(t)$ are the control surface angles for vertical stabilizers, rudders, and ailerons.
The dynamics of the states are defined by forces and angular moments acting on the aircraft. Specifically, the drag $F_D$, side $F_S$ and lift $F_L$ forces have the form\vspace{-1mm}
\begin{equation*}
\bmat{F_D(t) \\ F_S(t)\\ F_L(t)} \hspace{-0.5mm}= \hspace{-0.5mm}\bmat{\bar F(t)C_D(x(t), u(t))\\ \bar F(t)C_S(x(t), u(t))\\\bar F(t)C_L(x(t), u(t))},
\end{equation*}
where  $\bar F(t) = 0.5\rho v(t)^2S$,  and where $\rho = 0.075\, \text{lb}\cdot \text{ft}^{-3}$ is the air density and $S = 400\, \text{ft}^2$ is the wing area. $C_D, C_S, C_L$ are functions representing aerodynamic coefficients, which typically depend on the states and inputs of the system.

The roll, pitch, and yaw moments are given by\vspace{-2mm}
\begin{align*}
\bmat{M_r(t) \\ M_p(t)\\ M_y(t)} \hspace{-0.5mm}=   \hspace{-0.5mm}\bmat{\bar M_1(t)C_r(x(t), u(t))\\\bar M_2(t)C_p(x(t), u(t))\\\bar M_2(t)C_y(x(t), u(t))},\\[-7mm]
\end{align*}
where $\bar M_1(t)\hspace{-0.5mm} =\hspace{-0.5mm} 0.5 \rho v(t)^2 S \bar c$, $\bar M_{2}(t)\hspace{-0.5mm}=\hspace{-0.5mm}0.5 \rho v(t)^2 S b $,  $c = 11.45\, \text{ft}$ is the mean aerodynamic chord and $\bar b =  37.42\, \text{ft}$ is the wing span. $C_r, C_p, C_y$ are aerodynamic coefficients of roll, pitching, and yaw moments.

For aerodynamics coefficients, we consider the model derived in~\cite{chakraborty2011susceptibility}\vspace{-2mm}
\begin{align*}
C_D &= \mu_{D,0} +\mu_{D_{\alpha}}^T Z_4(\alpha) \cos(\beta) +  \mu_{D_{stab}}^T Z_3(\alpha) \delta_{stab}\\
C_L &= \mu_{L_{\alpha}}^T Z_3(\alpha)\cos(\frac{2\beta}{3}) + \mu_{L_{stab}} Z_3(\alpha)\delta_{stab} \\
C_S &= \mu_{S_{\beta}}^T Z_2(\alpha)\beta  + \mu_{S_{{ail}}}^T Z_3(\alpha)\delta_{ail}  + \mu_{S_{{rud}}} Z_3(\alpha)\delta_{rud}\\
C_r &=  \mu_{r_{\alpha}}^T Z_4(\alpha)\beta + \mu_{r_{{ail}}} Z_3(\alpha)\delta_{ail} + \mu_{r_{{rud}}}^T Z_3(\alpha)\delta_{rud} \\
&\qquad\qquad+  \mu_{r_{p}}^T Z_1(\alpha) \frac{bp}{2v}+ \mu_{r_{r}}^T Z_1(\alpha)\frac{br}{2v}\\[-1mm]
C_p &= \mu_{p_{\alpha}}^T Z_2(\alpha) +\mu_{p_{stab}}^T Z_2(\alpha) \delta_{stab} + \mu_{p_{q}}^T Z_3(\alpha) \frac{\bar c q}{2v}\\[-1mm]
C_y &= \mu_{y_{\beta}}^T Z_2(\alpha)\beta  + \mu_{y_{rud}}^T Z_4(\alpha)\delta_{rud} + \mu_{y_{ail }}^T Z_3(\alpha) \delta_{ail}\\
&\qquad\qquad + \mu_{y_{p}}^T Z_1(\alpha)\frac{bp}{2v} + \mu_{y_{r}} ^TZ_1(\alpha)\frac{br}{2v},\\[-8mm]
\end{align*} 
where $Z_m(\alpha)$ is the vector of monomials with degree less or equal $m$ and $\mu_{i}$ are vectors of parameters with appropriate dimensions. 
The values of the coefficients are given as in Tables 2 and 3 in~\cite{chakraborty2011susceptibility}. 

We assume that coefficients for drag force $\mu_{D,0}, \mu_{D, \alpha}, \mu_{D_{stab}}$ and the roll moment $\mu_{r_\alpha}, \mu_{r_p}, \mu_{r_r}, \mu_{r_{ail}}, \mu_{r_{rud}}$ are unknown parameters in the model. In this section, we estimate these 28 unknown parameters -- i.e., $\theta$ has the form\vspace{-2mm}
\begin{equation}\label{eqn:parameters-FA}
\theta  = [\mu_{D,0}^T, \mu_{D, \alpha}^T, \mu_{D_{stab}}^T, \mu_{r_\alpha}^T, \mu_{r_p}^T, \mu_{r_r}^T, \mu_{r_{ail}}^T, \mu_{r_{rud}}^T] \in \R^{28}.\vspace{-3mm}
\end{equation} 
%
\subsection{The Equations of Motions}

The attitude dynamics of the F/A-18 are modelled as\vspace{-2mm}
\begin{align*}
     &m \dot v(t) \hspace{-0.5mm}= \hspace{-0.5mm} -\hspace{-0.5mm} F_S(t) \sin\beta(t)\hspace{-0.5mm} + \cos\beta(t)(- F_D(t) \hspace{-0.5mm}+\hspace{-0.5mm} T(t)\cos\alpha(t)) \\
    &\quad+ \hspace{-0.5mm}mg\sin\phi(t) \cos\theta(t) \sin\beta(t)\hspace{-0.5mm}-\hspace{-0.5mm} mg\sin\theta(t) \cos\alpha(t)\cos\beta(t)  \\ 
    &\quad+\hspace{-0.5mm} mg\cos\phi(t)\cos\theta(t) \sin\alpha(t) \cos\beta(t) \\
   &mv(t)\cos\beta(t) \dot \alpha(t) = -F_L(t) + mv(t) cos\beta(t) q (t)\\
   &\quad-mv\sin\beta(t)(p(t)\cos\alpha(t) + r(t) \sin\alpha(t)) - T(t)\sin\alpha(t)   \\
   &\quad+ mg \big(\cos\phi(t)\cos\theta(t) \cos\alpha(t)+ \sin\alpha(t)\sin\theta(t)\big)\\
    &mv(t)\dot \beta(t) \hspace{-0.5mm}= \hspace{-0.5mm}F_S(t)\cos\beta(t)\hspace{-0.5mm} + \hspace{-0.5mm}\sin\beta(t)(F_D(t) \hspace{-0.5mm}+\hspace{-0.5mm} T(t) \cos\alpha(t)) \\
    &\quad- mv(t)r(t)\cos \alpha(t) + mg\cos\beta(t)\sin\phi(t)\cos\theta(t)\\
    &\quad+ mg\sin\beta(t)\cos\alpha(t) \sin\theta(t) + mv(t)p(t) \sin\alpha(t)  \\
    &\qquad- mg \sin(\beta)\sin\alpha(t) \cos\phi(t) \cos\theta(t), \\[-7mm]
\end{align*}
where $g = 32.17 \, \text{ft}\cdot\text{s}^{-2}$ is the gravitational constant and $m = 1034\, \text{slug}$ is the mass of the aircraft. 

Euler angles can be determined as\vspace{-2mm}
\begin{align*}
\bmat{\dot\phi(t) \\ \dot \theta(t) \\ \dot \psi(t)}  &= \bmat{1 & \sin\phi(t) {\tan\theta(t)} & \cos\phi(t){\tan\theta(t)} \\ 0  &\cos\phi(t) &-\sin\phi(t) \\ 0& \sin\phi(t){\sec\theta(t)} & \cos\phi(t) {\sec\theta(t)}}  \hspace{-1mm}\bmat{p(t) \\ q(t) \\ r(t)}\\[-1mm]
    \bmat{\dot p(t) \\ \dot q(t) \\ \dot r(t)} &= \mbf J \Biggl(\bmat{M_r(t) \\ M_p(t)\\ M_y(t)} \hspace{-1mm} - \hspace{-1mm}\bmat{p(t) \\ q(t) \\ r(t)} \hspace{-1mm}\times \hspace{-1mm} \bmat{I_{xx} p(t) - I_{xz} r(t) \\ I_{yy} q(t) \\ -I_{xz}p(t) + I_{zz}r(t)} \Biggr), \\[-7mm]
\end{align*}
where $I_{xx} = 2.3\cdot 10^4\,\text{slug}\cdot \text{ft}^2, I_{yy} =  151\,293\,\text{slug}\cdot \text{ft}^2, I_{zz} = 169\,945\,\text{slug}\cdot \text{ft}^2$ are roll, pitch and yaw axis moments of intertia, $I_{xz} = -2971\,\text{slug}\cdot \text{ft}^2$ is the cross product of inertia about $y$ axis and where \vspace{-3mm}
$${\small
\mbf J = \frac{1}{I_{xx}I_{zz} - I_{xz}^2} \bmat{I_{zz} & 0 & I_{xz} \\ 0& (I_{xx}I_{zz} - I_{xz}^2)I_{yy}^{-1} & 0 \\ I_{xz} & 0 & I_{xx}}}.\vspace{-2mm}
$$
\subsection{Estimation of the Aerodynamic Coefficients}

\begin{table}[t]\centering
\begin{tabular}{cc|cc|cc}
\multicolumn{2}{c|}{$\mu_{D,0}, \mu_{D,\alpha}$} & \multicolumn{2}{c|}{$\mu_{r_\alpha}$} & \multicolumn{2}{c}{$\mu_{r_p}, \mu_{r_r}$}\\
True\hspace{-2.5mm}  & \hspace{-2.5mm}Estimated\hspace{-2.5mm}  & True\hspace{-2.5mm}  &\hspace{-2.5mm}Estimated\hspace{-2.5mm} &True\hspace{-2.5mm}  & \hspace{-2.5mm}Estimated\hspace{-2.5mm}  \\
 \hspace{-2.5mm}$\begin{array}{r}  1.504 \\ -1.499\\ -0.200\\  6.397\\ -5.734\\  1.461\end{array}$\hspace{-2.5mm}
 & 
 \hspace{-2.5mm}$\begin{array}{r}  1.441 \\ -1.502\\ -0.0552\\  6.507\\ -5.687\\  1.441\end{array}$\hspace{-2.5mm} 
 &
 \hspace{-2.5mm} $\begin{array}{r} -0.056 \\ -0.415 \\ -0.362 \\  2.384 \\ -1.620\end{array}$\hspace{-2.5mm}
 &
 \hspace{-2.5mm}$\begin{array}{r} -0.071 \\ -0.414 \\ -0.316\\  2.376 \\ -1.750\end{array}$\hspace{-2.5mm} 
 &
  \hspace{-2.5mm}$\begin{array}{r}-0.354 \\ 0.238 \\  0.198 \\  0.780 \\ -1.087\end{array}$\hspace{-2.5mm}
  &
  \hspace{-2.5mm} $\begin{array}{r} -0.412 \\ 0.206 \\  0.301 \\  0.951 \\ -0.852\end{array}$ \hspace{-2.5mm}\\
 \hline
\multicolumn{2}{c|}{$\mu_{r_{ail}}$} & \multicolumn{2}{c|}{$\mu_{D_{stab}}$} & \multicolumn{2}{c}{$\mu_{r_{rud}}$}\\
True\hspace{-2.5mm}  & \hspace{-2.5mm}Estimated\hspace{-2.5mm}  & True\hspace{-2.5mm}  &\hspace{-2.5mm}Estimated\hspace{-2.5mm} &True\hspace{-2.5mm}  & \hspace{-2.5mm}Estimated\hspace{-2.5mm}  \\
\hspace{-2.5mm}$\begin{array}{r} 0.142 \\ -0.052\\ -0.265\\ 0.199\end{array}$\hspace{-2.5mm}
&
\hspace{-2.5mm} $\begin{array}{r} 0.141 \\ -0.067\\ -0.236\\ 0.197\end{array}$\hspace{-2.5mm} 
  &
  \hspace{-2.5mm} $\begin{array}{r} 0.037 \\ -0.274\\ 4.236\\ -3.858\end{array}$\hspace{-2.5mm}
  &
  \hspace{-2.5mm} $\begin{array}{r} 0.393 \\ 0.078\\  4.558 \\ -3.579\end{array}$ \hspace{-2.5mm} 
  & 
  \hspace{-2.5mm} $\begin{array}{r} 0.013\\ 0.001 \\ 0.008 \\ -0.027\end{array}$\hspace{-2.5mm}
  &
  \hspace{-4mm} $\begin{array}{r}  0.016\\ -0.040 \\  0.007\\  0.018\end{array}$\hspace{-2.5mm} 
\end{tabular}\vspace{-1mm}
\caption{Parameter estimation results for the F/A-18 model using Alg.~\ref{alg4:ext_picard} with $\lambda = 10$, $n = 3$ and $T = 0.1$. The left columns represent the true values of the estimated parameters. The right columns are the estimated values.}\vspace{-4mm}
\label{tab:results_FA}
\end{table}

To estimate  the unknown parameters in~\eqref{eqn:parameters-FA}, we run $6$ experiments for identification of $\theta_1 = [\mu_{D,0}, \mu_{D,\alpha}]$, $\theta_2 = \mu_{D_{stab}}$, $\theta_3 = [\mu_{r_\alpha}]$, $\theta_4 = [\mu_{r_p}, \mu_{r_r}]$, $\theta_5 = \mu_{r_{ail}}$ and $\theta_6 = \mu_{r_{rud}}$. For each experiment,  we assume that all other parameters are known. Then, we generate the data set of  the form 
\vspace{-2mm}
\[
y_{ij} =  (1 + n_i) \phi_{u_j}(t_i, x_{0j}, \theta)  + m_i,\vspace{-2mm}
\]
where $\phi_{u_j}$ is the solution map representing the dynamic of the states in~\eqref{eqn:states-FA} with the input $u_j$ and the initial condition $x_{0j}$, which are specified later. $n_i$ and $m_i$ are normally distributed random variables with zero mean and a variance of $0.05$. Time points $t_i$ are $t_i \in \{0:0.1:5\}$.

For estimating $\theta_1, \theta_3, \theta_4$, we use the data set generated from two instances (two initial conditions) with inputs of $T(t) = 14500\;\text{lbf}$, $\delta_{stab}(t) = \delta_{ail}(t) = \delta_{rud}(t) = 0$. The initial conditions are\vspace{-2mm}
\begin{align} \label{eqn:initial_condition_testing_FA}
x_{01} &= [450, -1.25, 1.0, 0.175,0,0.0875, 0,0.175,0]^T\\
x_{02} &= [350, 0.35, 0.7, 0.175,1.5,0.0875, 0,0.175,0]^T. \notag\\[-8mm]\notag
\end{align}

For estimating $\theta_5$, we also use the measurements that are taken over two instances with inputs of $T(t) = 14500\;\text{lbf}$, $\delta_{stab}(t) = \delta_{rud}(t) = 0$ and $\delta_{ail}(t) = 0.25I(t \in [0, 2]) - 0.25I(t \in (2, 4])$, where $I(t \in [a, b]) = \begin{cases} 1 & \text{ if } t \in [a, b] \\ 0& \text{ otherwise} \end{cases}$.\vspace{-2mm}\\
 The initial conditions are as in~\eqref{eqn:initial_condition_testing_FA}.

The estimations of $\theta_2$ and $\theta_6$ are more challenging. So in this case, we use the data set for three instances. Two initial conditions are chosen as in~\eqref{eqn:initial_condition_testing_FA} and one additional initial condition is $x_{03} = [350, 1, 1, 0.175,0,0.0875, 0,0.175,0]^T$.

Estimation $\theta_2$, we use inputs of $T(t) = 14500\;\text{lbf}$, $\delta_{ail}(t) = \delta_{rud}(t) = 0$ and $\delta_{stab}(t) =0.25I(t \in [0, 2]) - 0.25I(t \in (2, 4])$. For estimation $\theta_6$, we use input of $T(t) = 14500\;\text{lbf}$, $\delta_{stab}(t) = \delta_{ail}(t) = 0$ and $\delta_{rud}(t) = 0.2I(t \in [0, 2]) - 0.2I(t \in (2, 4])$. The results of our parameter estimation task are presented in Tab.~\ref{tab:results_FA}.

\section{Conclusion}

In this paper, we formulate the grey-box parameter estimation problem as optimization of parameters and solution subject to the constraint that the solution be a fixed point of the Picard operator. To solve this constrained optimization problem, we propose a gradient contractive algorithm, which uses the contractivity of the Picard operator in lieu of a projection to ensure the solution converges to the solution map of the associated nonlinear ODEs. The proposed method allows for sparse, irregular, and includes only partial measurements of the state. Furthermore, the approach does not require numerical simulation or measurements of time derivatives. The method is tested on models of spacecraft and aircraft motion. In the first example, we estimate the inertia tensor in the spacecraft model with $4$ control-moment gyros. In the second example, we estimate control surface coefficients in the model of the F/A-18 aircraft.
 



\bibliographystyle{IEEEtran}
\bibliography{Talitckii_Picard}

\end{document}